\newtheorem{Lemma}{Lemma}
\newtheorem{Proposition}{Proposition}
\newtheorem{Theorem}{Theorem}
\newtheorem{Definition}{Definition}
\newtheorem{Problem}{Problem}
\newtheorem{Corollary}{Corollary}
\newtheorem{Remark}{Remark}
\numberwithin{Subcase}{Case}
\renewcommand{\Re}{\mathbb R}
\newcommand{\HH}{\mathcal{H}}
\newcommand{\KK}{\mathcal{K}}
\renewcommand{\S}{\mathbb{S}}
\DeclareMathOperator{\conv}{conv}
\DeclareMathOperator{\perim}{perim}
\DeclareMathOperator{\area}{area}
\DeclareMathOperator{\bd}{bd}
\DeclareMathOperator{\inter}{int}
\newcommand{\bs}{\bar{s}}
\newcommand{\bS}{\bar{S}}
\newcommand{\bt}{\bar{t}}
\newcommand{\bT}{\bar{T}}
\begin{document}

\title[On a Dowker-type problem]{On a Dowker-type problem for convex disks with almost constant curvature}

\author[B. Basit]{Bushra Basit}
\author[Z. L\'angi]{Zsolt L\'angi}

\address{Bushra Basit, Department of Algebra and Geometry, Budapest University of Technology and Economics\\
M\H uegyetem rkp. 3., H-1111 Budapest, Hungary}
\email{bushrabasit18@gmail.com}
\address{Zsolt L\'angi, Department of Algebra and Geometry, Budapest University of Technology and Economics, and MTA-BME Morphodynamics Research Group\\
M\H uegyetem rkp. 3., H-1111 Budapest, Hungary}
\email{zlangi@math.bme.hu}

\thanks{Partially supported by the National Research, Development and Innovation Office, NKFI, K-147544 grant}

\keywords{Dowker's theorems, inscribed polygon, area, Hausdorff distance}

\subjclass[2020]{52A40, 52A30, 52A27}

\begin{abstract}
A classical result of Dowker (Bull. Amer. Math. Soc. 50: 120-122, 1944) states that for any plane convex body $K$, the areas of the maximum (resp. minimum) area convex $n$-gons inscribed (resp. circumscribed) in $K$ is a concave (resp. convex) sequence. It is known that this theorem remains true if we replace area by perimeter, or convex $n$-gons by disk-$n$-gons, obtained as the intersection of $n$ closed Euclidean unit disks. It has been proved recently that if $C$ is the unit disk of a normed plane, then the same properties hold for the area of $C$-$n$-gons circumscribed about a $C$-convex disk $K$ and for the perimeters of $C$-$n$-gons inscribed or circumscribed about a $C$-convex disk $K$, but for a typical origin-symmetric convex disk $C$ with respect to Hausdorff distance, there is a $C$-convex disk $K$ such that the sequence of the areas of the maximum area $C$-$n$-gons inscribed in $K$ is not concave. The aim of this paper is to investigate this question if we replace the topology induced by Hausdorff distance with a topology induced by the surface area measure of the boundary of $C$.
\end{abstract}

\maketitle

\section{Introduction}\label{sec:intro}

In the paper, by a convex disk we mean a compact, convex set with nonempty interior in the Euclidean plane $\Re^2$, and denote the closed, Euclidean unit disk centered at the origin $o$ by $B^2$. Furthermore, we denote by $\KK$ and $\KK_o$ the families of convex disks and origin-symmetric convex disks, respectively. The families of the elements of $\KK$ and $\KK_o$ with $C^2$-class boundary and strictly positive curvature are denoted by $\KK_{2+}$ and $\KK_{o,2+}$, respectively. 

For any integer $n \geq 3$ and plane convex body $K$, let $A_n(K)$ (resp. $a_n(K)$) denote the the infimum (resp. supremum) of the areas of the convex polygons with at most $n$ vertices circumscribed about (resp. inscribed in) $K$. A famous result of Dowker \cite{Dowker} states that for any convex disk $K$, the sequences $\{ A_n(K) \}$ and $\{ a_n(K) \}$ are convex and concave, respectively. It was proved independently by L. Fejes T\'oth \cite{LFTperim}, Moln\'ar \cite{Molnar} and Eggleston \cite{Eggleston} that the same statements remain true if we replace area by perimeter, where the last author also showed that these statements are false if we replace area by Hausdorff distance. These results are known to be true also in any normed plane \cite{MSW}. Dowker's theorems and their variants have become important in many areas of discrete geometry, in particular in the theory of packing and covering \cite{LFTSzeged, Bambah, regfig} and are often used (see e.g. \cite{Prosanov, BL23}).

We mention only one variant of these theorems, for which we first
need some definitions. If $C$ is an origin-symmetric convex disk in the Euclidean plane $\Re^2$, we call a set $X$ \emph{$C$-spindle convex}, or shortly \emph{$C$-convex}, if it is $\Re^2$, or there is a translate of $C$ with $X \subseteq C$ and for any $p,q \in X$, $X$ contains the intersection of all translates of $C$ containing $\{ p,q\}$. This concept was introduced by Mayer \cite{Mayer} in 1935, and in modern times its systematic investigation, for the special case that $C$ is the Euclidean closed unit disk $B^2$ centered at the origin $o$, began in \cite{BLNP}, and the general case for example in \cite{LNT2013}. A result related to spindle convex sets is due to G. Fejes T\'oth and Fodor \cite{TF2015} who extended Dowker's theorems, together with their variants for perimeter, for spindle convex sets; in these theorems the role of inscribed or circumscribed convex $n$-gons is played by the so-called \emph{disk-$n$-gons}, obtained as the intersections of $n$ closed Euclidean unit disks. They also proved similar theorems for spindle convex sets in hyperbolic or spherical plane, except for the perimeter of circumscribed disk-polygons in spherical plane.

These results were generalized in \cite{BL}, where the authors proved that for any $o$-symmetric convex disk $C$, the same statements hold for the areas of circumscribed $C$-$n$-gons, and the $C$-perimeters of inscribed and circumscribed $C$-$n$-gons in any $C$-convex disk $K$, where a $C$-$n$-gon is defined to be the intersection of $n$ translates of $C$. In addition, they showed that for a typical $C \in \KK_o$, with respect to the topology induced by Hausdorff distance, there is a $C$-convex disk $K$ such that the sequence of the areas of the maximum area $C$-$n$-gons inscribed in $K$ is not concave.
Here we recall that the elements of a residual set of a topological space $\tau$ are called \emph{typical}, where a residual set is a countable intersection of sets of dense interiors in $\tau$.

In the following we say that $C \in \KK_o$ satisfies the \emph{Dowker Property} if the above sequence is concave for any $C$-convex disk $K$.
We remark that the proof of almost all Dowker-type theorems mentioned in the introduction are based on different versions of the same inequality for convex (resp. $C$-convex) quadrangles; we are going to define this inequality for the areas of inscribed $C$-$n$-gons in Section~\ref{sec:prelim}, and say that an $o$-symmetric convex disk satisfying this inequality for any $C$-convex quadrangle satisfies the \emph{Quadrangle Property}, we note that any $C \in \KK_o$ satisfying the Quadrangle Property also satisfies the Dowker Property (for a more precise and detailed discussion, see Section~\ref{sec:prelim}).

Our first result is a new proof of the theorem in \cite{TF2015} stating that the Euclidean unit disk $B^2$ satisfies the Dowker Property. We prove Theorem~\ref{thm:old} by showing that $B^2$ satisfies the Quadrangle Property. In order to state it, for any $n \geq 3$, $C \in \KK_o^H$, and $C$-convex disk $K$, we set $\hat{a}_n^C(K) = \sup \{ \area(Q) : Q \hbox{ is a \textit{C}-\textit{n}-gon inscribed in }  K \}$.

\begin{Theorem}\label{thm:old}
For any $n \geq 4$ and spindle convex disk $K$, we have $\hat{a}_{n-1}^{B^2}(K) + \hat{a}_{n+1}^{B^2}(K) \leq 2 \hat{a}_n^{B^2}(K)$.
\end{Theorem}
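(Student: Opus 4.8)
The plan is to verify that the Euclidean unit disk $B^2$ satisfies the Quadrangle Property introduced in Section~\ref{sec:prelim}. Since, as explained there, the Quadrangle Property implies the Dowker Property, and the Dowker Property for $B^2$ is precisely the assertion that $\hat a_{n-1}^{B^2}(K)+\hat a_{n+1}^{B^2}(K)\le 2\hat a_n^{B^2}(K)$ for every $n\ge 4$ and every spindle convex disk $K$, Theorem~\ref{thm:old} follows at once. Thus the entire task reduces to establishing the quadrangle inequality of Section~\ref{sec:prelim} --- a four-point inequality among areas of inscribed disk-polygons in a $B^2$-convex quadrangle --- for an arbitrary such quadrangle.

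For this I would first set up the planimetry of inscribed disk-$n$-gons. A maximal-area inscribed disk-$n$-gon of a spindle convex disk $K$ is the spindle convex hull of $n$ points $p_1,\dots,p_n$ of $\bd K$ listed in cyclic order, so it has area $\area(K)-\sum_{i=1}^{n}c(p_i,p_{i+1})$, where $c(p,q)$ is the area of the region of $K$ trapped between the sub-arc of $\bd K$ from $p$ to $q$ and the unit-circle arc joining $p$ to $q$ along the corresponding side of the hull; here one uses that $\bd K$ has curvature at least $1$ (since $K$ is an intersection of unit disks) and that the supporting unit disk along a side of a spindle convex hull contains the whole hull, so that the side, and hence $c$, depends only on its two endpoints. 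I would also record the decomposition $c(p,r)=c(p,q)+c(q,r)+\tau(p,q,r)$ for cyclically ordered $p,q,r$, where $\tau(p,q,r)$ is the area of the disk-triangle bounded by the three unit-circle arcs over $pq$, $qr$ and $pr$. The point of specializing to $C=B^2$ is that rotational invariance makes $\tau$, and the area of every circular segment occurring in $c$, a function of the relevant chord lengths alone; this turns the quadrangle inequality into an explicit relation among a bounded number of such elementary areas attached to one $B^2$-convex quadrangle.

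Next I would parametrize the $B^2$-convex quadrangles --- by finitely many real parameters after factoring out rigid motions, conveniently through the positions of the four unit-circle centers carrying the sides --- and express each term of the quadrangle inequality in these parameters. I would then verify the resulting inequality by calculus: fixing all parameters but one, I would differentiate in the remaining one and show that its derivative has constant sign, or changes sign at most once as that parameter runs to the boundary of its admissible range, reducing the problem to a one-variable estimate and finally to the limiting (degenerate) configurations. The reflection $(p_1,p_2,p_3,p_4)\mapsto(p_4,p_3,p_2,p_1)$ should cut the casework roughly in half, and the convexity of the area of a unit-disk circular segment as a function of its chord length is the elementary monotonicity fact used repeatedly.

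The step I expect to be the main obstacle is this last one. In the classical (chord) version of Dowker's theorem the corresponding quadrilateral inequality collapses to comparisons of triangle areas and is essentially algebraic, whereas here every region is bounded by unit-circle arcs, so the quadrangle inequality is genuinely transcendental in the angular parameters, and the sign analysis of its one-variable reductions --- in particular, excluding spurious interior critical points --- is delicate. A secondary issue requiring care is the reduction itself: one must check that in an extremal inscribed disk-$n$-gon each side is indeed determined by its two endpoints (so that $c$ and the above decomposition are legitimate), and that degenerate quadrangles --- with coincident or diametral vertices, or with a side whose length approaches $2$ --- are handled by a limiting argument rather than being allowed to destroy the parametrization.
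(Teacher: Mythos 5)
Your opening reduction is the same as the paper's: invoke Remark~\ref{rem:QtoD} so that the whole theorem follows once $B^2$ is shown to satisfy the Quadrangle Property, and your observation that the $K$-dependent parts cancel so that the inequality becomes a statement about four boundary points and the unit-circle arcs joining them is sound. The genuine gap is that you never actually prove that inequality. Everything after the reduction is a plan stated in the conditional (``I would parametrize\dots, I would then verify the resulting inequality by calculus\dots, show that its derivative has constant sign, or changes sign at most once''), and you yourself flag the sign analysis of the resulting transcendental expressions, including the exclusion of spurious interior critical points, as the main obstacle. That obstacle is precisely the content of the theorem; without carrying out the multi-parameter case analysis (or at least exhibiting the one-variable reductions and their monotonicity in a checkable form), the argument establishes nothing beyond the formal reduction already contained in Remark~\ref{rem:QtoD}. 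As it stands there is also no evidence that the proposed global parametrization by the four arc centers leads to a tractable case structure: the admissible region is four-dimensional after factoring out rigid motions, and ``changes sign at most once'' is asserted, not derived.

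For comparison, the paper avoids the global four-point analysis altogether: by Lemma~\ref{lem:derivative} the Quadrangle Property is equivalent to the nonnegativity of the mixed partial $\partial_s\partial_t \area(r_K(\Gamma(s),\Gamma(t)))$ at pairs of boundary points whose arc has turning angle less than $\pi$, i.e.\ to a local two-point condition. The proof then computes this derivative in closed form for an arbitrary smooth $C$ (via the Implicit Function Theorem applied to the functions $S(s,t)$, $T(s,t)$ and a decomposition of the area change into the regions $R$, $R_p$, $R_q$), arriving at the explicit expression in (\ref{eq:genform}), and for $C=B^2$ a short trigonometric estimate shows this expression is nonnegative. If you want to salvage your route, the honest next step is to write down the four-point inequality explicitly in your chosen parameters and actually perform the one-variable monotonicity arguments; alternatively, adopt the infinitesimal criterion of Lemma~\ref{lem:derivative}, which replaces the delicate global case analysis you anticipate by a single sign determination of a closed-form quantity.
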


Note that by the results in \cite{BL}, the result of  G. Fejes T\'oth and Fodor \cite{TF2015} about the areas of disk-$n$-gons inscribed in a spindle convex disk $K$ cannot be extended for `slightly perturbed' circles in place of $C$. In other words, if $\KK_o^H$ denotes the topological space induced by Hausdorff distance on $\KK_o$, $B^2$ has no neighborhood $U$ in $\KK_o^H$ such that any $C \in U$ satisfies the Dowker Property.
On the other hand, one may observe that if the boundary of some $C \in \KK_o^H$ has `almost unit curvature' at every point, then $C$ is `close' to $B^2$ with respect to Hausdorff distance, but the converse does not hold even if $C$ is assumed to have $C^2$-class boundary with strictly positive curvature.
Thus, it is a natural question to ask if any $C \in \KK_o^H$ with almost constant curvature satisfies the Dowker Property or not.

In order to investigate this question, we introduce a new topology on the family $\KK_o$. This topology is induced by a generalized distance function $d_{PM} : \KK_o \times \KK_o \to \Re \cup \{ \infty \}$, where $d_{PM}(C_1,C_2)$ is defined by the surface area measures of $C_1$ and $C_2$.
We call this generalized distance \emph{perimeter measure distance} or \emph{PM-distance}, and denote the topological space induced by it on $\KK_o$ by $\KK_o^{PM}$.
We show that $\KK_o^{PM}$ is a refinement of $\KK_o^H$.
We observe also that if $\KK_{o,2+}^{PM}$ denotes the subfamily of $\KK_o^{PM}$ consisting of the elements having $C^2$-class boundary and strictly positive curvature, then the property that $C_1 \in \KK_{o,2+}^{PM}$ is `close' to $C_2 \in \KK_{o,2+}^{PM}$ is equivalent to the property that the curvatures of $\bd(C_1)$ and $\bd(C_2)$ at points with the same outer normal vector are `almost equal'; in particular, within this class of convex disks, PM-distance coincides with the $L_{\infty}$ norm of the radius of curvature as a function of the outer unit vector.

Our second result is the following.

\begin{Theorem}\label{thm:counter}
A typical element $C \in \KK_{o,2+}^{PM}$ does not satisfy the Quadrangle Property.
\end{Theorem}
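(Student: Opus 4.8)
The plan is to show that the set $\mathcal{N}$ of those $C \in \KK_{o,2+}^{PM}$ that do \emph{not} satisfy the Quadrangle Property is open and dense in $\KK_{o,2+}^{PM}$; its complement is then closed with empty interior, hence nowhere dense and in particular meager, which is exactly the assertion that a typical $C \in \KK_{o,2+}^{PM}$ belongs to $\mathcal{N}$. Openness is the soft half. Write $(\ast)$ for the inequality of Section~\ref{sec:prelim} defining the Quadrangle Property, and recall that both of its sides are finite sums of areas of $C$-convex hulls of subsets of four vertices. If $C_0 \in \mathcal{N}$, then some quadruple $p_1,p_2,p_3,p_4$ forming the vertex set of a $C_0$-convex quadrangle violates $(\ast)$ strictly. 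Now being the vertex set of a $C$-convex quadrangle is an open condition on $C$ with the four points held fixed, and each area entering $(\ast)$ depends continuously on $C$ in the Hausdorff metric; since the $PM$-topology refines the Hausdorff topology, $(\ast)$ remains violated on this quadruple for every $C$ in a suitable $PM$-neighbourhood of $C_0$. Hence $\mathcal{N}$ is open.

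For density, fix $C \in \KK_{o,2+}^{PM}$ and $\varepsilon > 0$, and assume $C$ satisfies the Quadrangle Property, as otherwise there is nothing to do. Using the description of $\KK_{o,2+}^{PM}$ recalled before the statement of the theorem, I would represent $C$ by its radius-of-curvature function $\rho_C$, a positive, $\pi$-periodic function of the outer normal direction; perturbing $C$ within $\KK_{o,2+}^{PM}$ by a small amount then amounts to perturbing $\rho_C$ by a $C^0$-small function, which, by $o$-symmetry, only has to be $\pi$-periodic to again define an element of $\KK_{o,2+}$. The inequality $(\ast)$ is constructed so as to be tight for the disk and for degenerate configurations, so its defect tends to $0$ along suitable one-parameter families of $C$-convex quadrangles — for instance a family shrinking to a point, where at small scales the quadrangle is a small perturbation of a Euclidean quadrilateral and the surviving correction is governed by the curvatures of $\bd C$ at the normal directions $\nu_1,\dots,\nu_4$ of the four sides. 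Pushing this expansion to the first order at which the defect sees the deviation of $\rho_C$ from a constant at those directions, one obtains a functional of $\rho_C$ that on $C$ need not have a fixed sign. I would then pass from $C$ to the body $C_\delta \in \KK_{o,2+}$ with radius-of-curvature function $\rho_C + \delta h$, where $h$ is a fixed smooth, $\pi$-periodic function supported in small arcs around $\nu_1,\dots,\nu_4$ and their antipodes and chosen so that this functional becomes \emph{negative}; for all small $\delta > 0$ this keeps $C_\delta$ smooth with strictly positive curvature, has $d_{PM}(C, C_\delta) \to 0$ as $\delta \to 0^+$, and forces $(\ast)$ to fail on a suitable member of the family. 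Thus $C_\delta \in \mathcal{N}$ for $\delta$ small, so $\mathcal{N}$ is dense, and the theorem follows.

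The main obstacle is the density step, and within it the asymptotic bookkeeping. One has to choose the right family of quadrangles and the right normalising exponent so that the ``circular'' and degenerate contributions of $(\ast)$ cancel and the leading surviving term is a genuinely nonzero functional of the curvature perturbation whose sign can be prescribed; one must also check that this term really can be made negative by a $C^0$-small, curvature-localised, $\pi$-periodic modification of $\rho_C$, which then automatically keeps $C_\delta$ an $o$-symmetric convex disk and is small in $PM$-distance. A secondary, softer point is to confirm that the functionals occurring in $(\ast)$ are continuous on $\KK_{o,2+}^{PM}$ and that the topology behaves as the heuristic above suggests; by contrast, the openness of $\mathcal{N}$ and the passage from ``open and dense'' to ``typical'' are routine.
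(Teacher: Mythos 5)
Your overall strategy coincides with the paper's: show that the set $\mathcal{N}$ of bodies violating the Quadrangle Property is open and dense in $\KK_{o,2+}^{PM}$. The openness half of your argument is essentially the paper's and is fine modulo routine detail (the paper fixes a violating configuration, writes $K=\bigcap_{x\in X}(x+C)$, replaces $C$ by a Hausdorff-close $C_m$ in this intersection, and uses continuity of the areas of the regions $r_K^C(x_i,x_j)$; in your version you would take, say, $K=\conv_C\{p_1,\dots,p_4\}$ and argue similarly, using that the $PM$-topology refines the Hausdorff topology). The genuine gap is the density half, which you yourself flag as the ``main obstacle'': as written it is a programme, not a proof. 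The assertions that the leading term of the defect of the quadrangle inequality along a shrinking family of quadrangles is a functional of $\rho_C$ that ``need not have a fixed sign'', and that it ``can be made negative'' by a $C^0$-small perturbation $\delta h$ supported near four normal directions, are precisely what has to be proved; nothing in the proposal identifies this functional, the correct normalising order in the scale parameter, or the mechanism by which a $PM$-small perturbation flips its sign.

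For comparison, this is where all the work in the paper lies. Section~\ref{sec:old} derives the closed formula (\ref{eq:genform}) for $\partial_s\partial_t \area\bigl(r_K(\Gamma(s),\Gamma(t))\bigr)$ in terms of the tangent directions $\gamma_p,\gamma_q,\theta_p,\theta_q$, the curvatures $\kappa_p,\kappa_q$ of $\bd(C)$ and $\|p-q\|$, and Lemma~\ref{lem:derivative} converts the Quadrangle Property into nonnegativity of this mixed partial. In Section~\ref{sec:counter} the paper first shows that bodies whose curvature, in arclength parameter near a point of locally maximal curvature, equals $\kappa_0-\lambda s^2$ with $\lambda$ arbitrarily large form a dense subset of $\KK_{o,2+}^{PM}$; here the key point, which you correctly sense, is that $d_{PM}$ on $\KK_{o,2+}$ is controlled by the sup-distance of radius-of-curvature functions, so a condition on the \emph{second derivative} of the curvature can be imposed by an arbitrarily $PM$-small modification (your normal-angle parametrization even spares you the closing-up constraints (i)--(iii) that the arclength picture requires, thanks to $\pi$-periodicity). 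Then, plugging $p=\Theta(-\bs)$, $q=\Theta(\bs)$ into (\ref{eq:genform}) yields $(\partial_s\partial_t A)(-\bs,\bs)=\frac{4\kappa_0(9\kappa_0^3-2\lambda)}{3}\,\bs^4+O(\bs^6)$, which is negative for small $\bs$ exactly when $\lambda>\frac{9\kappa_0^3}{2}$. This threshold shows that the sign of your ``defect functional'' is not generically negative: a violation appears only when the curvature drops quadratically fast enough at a local maximum, and seeing this requires carrying the expansion to fourth order. Without this computation (or an equivalent quantitative criterion), the density step, and hence the theorem, is not established by your argument.
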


We ask the following.
 
\begin{Problem}\label{prob:counter}
Prove or disprove that a typical element $C \in \KK_{o}^{PM}$ does not satisfy the Dowker Property. What happens if we replace $\KK_{o}^{PM}$ by $\KK_{o,2+}^{PM}$, or the Dowker Property by the Quadrangle Property?
\end{Problem}

The structure of the paper is as follows. In Section~\ref{sec:prelim}, we give precise definitions of the concepts mentioned in the introduction, investigate their properties, and prove some lemmas necessary to prove our theorems. Finally, in Sections~\ref{sec:old} and \ref{sec:counter}, we prove Theorems~\ref{thm:old} and \ref{thm:counter}, respectively.

In the paper, for any $p,q \in \Re^2$, we denote by $[p,q]$ the closed segment with endpoints $p,q$, by $||p-q||$ the Euclidean distance of $p$ and $q$, and by $|p,q|$ the value of the $2 \times 2$ determinant whose columns are $p$ and $q$.

\section{Preliminaries}\label{sec:prelim}

We start with a formal definition of a $C$-spindle.

\begin{Definition}\label{defn:C-spindle}
Let $C \in \KK_o$, and consider two (not necessarily distinct) points $p, q \in \Re^2$ such that a translate of $C$ contains both $p$ and $q$.
Then the \emph{$C$-spindle} of $p$ and $q$, denoted by $[p,q]_C$, is the intersection of all translates of $C$
that contain $p$ and $q$. If no translate of $C$ contains $p$ and $q$, we set $[p,q]_C = \Re^2$.
We call a set $K \in \KK$ \emph{$C$-spindle convex},  or shortly \emph{$C$-convex}, if for any $p,q \in K$, we have $[p,q]_C \subset K$.
\end{Definition}

We recall from \cite[Corollary 3.13]{LNT2013} that a closed set in $\Re^2$, different from $\Re^2$,  is $C$-convex if and only if it is the intersection of some translates of $C$.
Furthermore, we recall that for any set $X \subseteq \Re^2$, the \emph{$C$-convex hull}, or shortly \emph{$C$-hull} is the intersection of all $C$-convex sets that contain $X$. We denote it by $\conv_C(X)$, and note that it is $C$-convex, and if $X$ is compact, then it coincides with the intersection of all translates of $C$ containing $X$ \cite{LNT2013}.

\begin{Definition}\label{defn:Cpolygon}
The intersection of $n$ translates of $C$ is called a $C$-$n$-gon.
\end{Definition}

Note that any $C$-$n$-gon is the $C$-hull of at most $n$ points, and vice versa \cite{BL}.

Let $K$ denote a $C$-convex disk, and for any distinct points $p,q \in \bd(K)$, let $\widehat{pq}$ denote the connected arc of $\bd(K)$ such that we move along $\bd (K)$ according to the positive orientation of the plane defined by the standard basis vectors. Furthermore, let $r_K^C(p,q)$ denote the region bounded by $\widehat{pq}$ and the $C$-arc in the boundary of $[p,q]_C$ that runs from $p$ to $q$ in counterclockwise direction; in this notation we may omit the symbol $C$ if it is clear which $o$-symmetric convex disk it refers to.

For any $p \in \bd(K)$, a point $q \in \bd(K)$ is \emph{antipodal} to $p$ if $K$ has parallel supporting lines at $p$ and $q$. If the arc $\widehat{pq} \subset \bd(K)$ contains no point antipodal to $p$, we say that \emph{$\widehat{pq}$ has turning angle strictly less than $\pi$}.

\begin{Definition}\label{defn:Qproperty}
Let $C \in \KK_o$. Assume that for any $C$-convex disk $K$, and for any boundary points $x_1,x_2,x_3,x_4$ of $K$ in this counterclockwise order, if $\widehat{x_1x_4}$ has turning angle strictly less than $\pi$, then
\[
\area(r_K(x_1,x_4))+\area(r_K(x_2,x_3)) \geq \area(r_K(x_1,x_3))+ \area(r_K(x_2,x_4)).
\]
Then we say that $C$ satisfies the \emph{Quadrangle Property} (see Figure~\ref{fig:quadrangle}).
\end{Definition}

\begin{figure}[ht]
\begin{center}
 \includegraphics[width=0.4\textwidth]{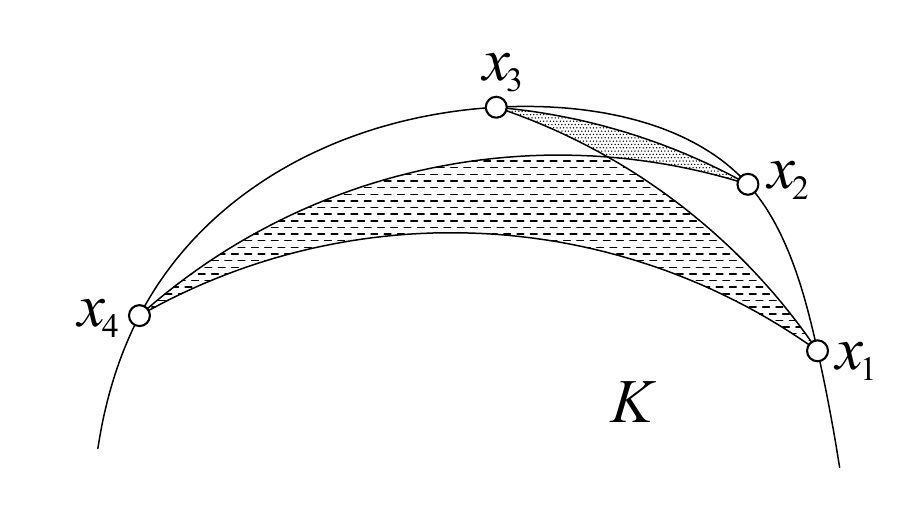}
 \caption{An illustration for the Quadrangle Property. According to it, the area of the dashed region is not less than the area of the dotted region.}
\label{fig:quadrangle}
\end{center}
\end{figure}

\begin{Definition}\label{defn:Dproperty}
Let $C \in \KK_o$. Assume that for any $C$-convex disk $K$ and $n \geq 4$, we have
\[
\hat{a}_{n-1}^C(K) + \hat{a}_{n+1}^C(K) \leq 2 \hat{a}_n^C(K).
\]
Then we say that $C$ satisfies the \emph{Dowker Property}.
\end{Definition}

Our next remark follows by a straightforward modification of \cite[Lemma 4]{BL}. The same statement for inscribed convex polygons or disk-polygons is the main idea of the proofs in \cite{Dowker} and \cite{TF2015}, respectively.

\begin{Remark}\label{rem:QtoD}
If $C \in \KK_o$ satisfies the Quadrangle Property, then it satisfies the Dowker Property.
\end{Remark}

Our next lemma can be proved by a straightforward modification of the proof of \cite[Lemma 5]{BL}.

\begin{Lemma}\label{lem:derivative}
Let $C \in \KK_o$, and let $K$ be a $C$-convex disk. Let $\Gamma(s)$ be the arclength-parametrization of the boundary of $K$.
Then, for any $x_i=\Gamma(s_i)$ with $i=1,2,3,4$ in this counterclockwise order where $\widehat{x_1x_4}$ has turning angle strictly less than $\pi$, we have
\[
\area(r_K(x_1,x_4))+\area(r_K(x_2,x_3)) \geq  \area(r_K(x_1,x_3))+ \area(r_K(x_2,x_4))
\]
if and only if $\left. \partial_s \partial_t \area(r_K(\Gamma(s),\Gamma(t)))\right|_{(s,t)=(\bs,\bt)} \geq 0$ for all $\Gamma(\bs)=p \neq \Gamma(\bt)=q$ where $\widehat{pq}$ has turning angle strictly less than $\pi$.
\end{Lemma}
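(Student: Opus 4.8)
The plan is to read the four-point inequality as a single statement about the \emph{mixed second difference} of a function of two variables, and to reduce the claimed equivalence to the classical fact that a twice continuously differentiable function has mixed second differences of one fixed sign over every rectangle exactly when its mixed partial derivative has that sign. Accordingly, set
\[
F(s,t)=\area(r_K(\Gamma(s),\Gamma(t)))
\]
on the open set $\mathcal{D}=\{(s,t): \Gamma(s)\neq\Gamma(t)\text{ and }\widehat{\Gamma(s)\Gamma(t)}\text{ has turning angle strictly less than }\pi\}$. For boundary points $x_i=\Gamma(s_i)$ in counterclockwise order with $s_1<s_2<s_3<s_4$, the four regions in the statement are $r_K(x_1,x_4)$, $r_K(x_2,x_3)$, $r_K(x_1,x_3)$, $r_K(x_2,x_4)$, i.e. the values $F(s_1,s_4)$, $F(s_2,s_3)$, $F(s_1,s_3)$, $F(s_2,s_4)$, so the inequality is precisely a statement about the mixed second difference of $F$ over the rectangle $[s_1,s_2]\times[s_3,s_4]$. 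The first point I would record is that the turning-angle hypothesis on $\widehat{x_1x_4}$ is inherited by every sub-arc $\widehat{\Gamma(s)\Gamma(t)}$ with $s_1\le s\le s_2\le s_3\le t\le s_4$; hence the whole rectangle lies in $\mathcal{D}$ and $F$ is well defined on it. This is exactly where the turning-angle assumption enters, and it is what makes the rectangle argument legitimate.

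The technical core, which I would carry out by adapting the differentiation argument of \cite[Lemma 5]{BL}, is to show that $F\in C^2(\mathcal{D})$; in particular that $\partial_s\partial_t F$ exists and is \emph{continuous} on $\mathcal{D}$. The region $r_K(\Gamma(s),\Gamma(t))$ is bounded by the arc $\widehat{\Gamma(s)\Gamma(t)}$ of $\bd(K)$ and by the $C$-arc of $\bd([\Gamma(s),\Gamma(t)]_C)$, so differentiating its area amounts to tracking how both arcs move as the endpoints vary. The outer arc moves only at its endpoint, but the $C$-arc changes shape globally with both endpoints, since $[\Gamma(s),\Gamma(t)]_C$ is an intersection of translates of $C$; writing $\area(r_K)$ by Green's formula around the closed boundary and differentiating under the integral sign produces $\partial_s F$, $\partial_t F$, and then $\partial_s\partial_t F$ as boundary contributions. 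I expect this regularity step to be the main obstacle, both because the dependence of the $C$-arc on the endpoints is nonlocal and because for a general $C\in\KK_o$ the boundaries of $C$ and of $K$ need not be smooth, so the required smoothness of the area functional must be extracted from the integral rather than inherited from the boundary. If needed, I would first establish the computation for $C,K\in\KK_{2+}$ by a direct Leibniz argument and then pass to the general case by approximation.

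Granting $F\in C^2(\mathcal{D})$, the equivalence follows from the fundamental theorem of calculus applied in each variable: for every admissible rectangle,
\[
F(s_2,s_4)-F(s_1,s_4)-F(s_2,s_3)+F(s_1,s_3)=\int_{s_1}^{s_2}\!\!\int_{s_3}^{s_4}\partial_s\partial_t F(s,t)\,dt\,ds,
\]
and the four-point expression $\area(r_K(x_1,x_4))+\area(r_K(x_2,x_3))-\area(r_K(x_1,x_3))-\area(r_K(x_2,x_4))$ is, up to the sign fixed by the counterclockwise orientation used in the definitions of $\widehat{pq}$ and $r_K$, exactly this mixed second difference of $F$. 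The forward implication is then immediate: if $\partial_s\partial_t F\ge 0$ throughout $\mathcal{D}$, each such double integral is signed so as to force the four-point inequality for every admissible quadruple. For the converse I would argue by contraposition: if $\partial_s\partial_t F$ fails to be nonnegative at some $(\bar s,\bar t)\in\mathcal{D}$, then by continuity it is negative on a small rectangle $[\bar s-\varepsilon,\bar s]\times[\bar t,\bar t+\varepsilon]\subset\mathcal{D}$ (using that $\mathcal{D}$ is open), so the corresponding mixed second difference of $F$ has the wrong sign, yielding boundary points $x_1,\dots,x_4$ that violate the inequality. The only remaining task is the orientation bookkeeping that fixes the sign in the display above; this routine check — which I would perform last — is what pins the condition down to $\partial_s\partial_t F\ge 0$ as stated, completing the equivalence.
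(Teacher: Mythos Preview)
Your proposal is correct and follows the same route the paper indicates (a straightforward adaptation of \cite[Lemma 5]{BL}): rewrite the four-point inequality as the mixed second difference of $F(s,t)=\area(r_K(\Gamma(s),\Gamma(t)))$ over the rectangle $[s_1,s_2]\times[s_3,s_4]$, establish $C^2$-regularity of $F$ on the admissible domain, and apply the fundamental theorem of calculus twice together with continuity of $\partial_s\partial_t F$ to obtain the equivalence. One caution on the ``orientation bookkeeping'' you defer: the mixed second difference as you wrote it equals the \emph{negative} of the four-point expression, so the sign check is not entirely cosmetic---do it carefully (the paper itself computes with $\bar A=\area(K)-A$ in Section~\ref{sec:old} precisely to align signs).
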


In the remaining part of this section we introduce a new metric on $\KK_o$ and examine its properties.
For any convex disk $K$, and for any $X \in \S^1$, let $N_K(X)$ denote the set of the boundary points $p$ of $X$ with the property that an element of $X$ is an outer unit normal vector of $K$ at $p$. Then, for any Borel set $\sigma \subseteq \S^1$, the \emph{surface area measure} of $\sigma$, denoted by $A_K(\sigma)$ is defined as the Hausdorff measure of $N_K(\sigma)$. We note that if $K$ has a $C^2$-class boundary with positive curvature, then for any Borel set $\sigma \subseteq \S^1$, $A_K(\sigma)$ is the integral of the radius of curvature of $K$ at $p(u)$ over $u \in \sigma$, where $p(u)$ denotes the unique point of $\bd(K)$ with outer unit normal vector $u$ \cite{Schneider}.

For any Borel set $\sigma \subset \S^1$, let $\HH(\sigma)$ 
denote the Hausdorff measure of $\sigma$.

\begin{Definition}\label{eq:PM_distance}
Let $C,D$ be $o$-symmetric convex bodies. We define the \emph{perimeter measure distance} or \emph{PM-distance} of $C$ and $D$ as
\begin{multline}\label{eq:PM}
d_{PM}(C,D)= \\
\inf \left\{ \tau \geq 0 : |A_C(\sigma) - A_D(\sigma)| \leq \tau \cdot \HH(\sigma) \hbox{ for any Borel-set } \sigma \subseteq \S^1  \right\}
\end{multline}
if the infimum exists, and otherwise we set $d_{PM}(C,D)=\infty$.
\end{Definition}

Note that by its definition, $d_{PM}(\cdot,\cdot)$ is clearly a nonnegative, symmetric function satisfying the triangle inequality. Furthermore, by \cite[Theorem 8.1.1]{Schneider}, $d_{PM}(C,D)=0$ and $C,D \in \KK_o$ imply that $C = D$.

 As we have remarked in the introduction, the function $d_{PM}(\cdot,\cdot)$ induces a topology on $\KK_o$, and the induced topological space is denoted by $\KK_o^{PM}$. It is worth noting that this topology can be naturally extended to the family $\KK$, or to families of convex bodies in $d$-dimensional Euclidean space for any value of $d$.

We note that by its definition, within the class $\KK_{o,2+}$, the statement of Proposition~\ref{prop:refinement} is obvious. We also note that the topology on $\KK_o$ defined by Hausdorff distance can also be generated via surface area measures. This was done in \cite{HS} by Hug and Schneider, who proved, in particular, that the topology induced by the Hausdorff distance of compact, convex sets is equivalent to the one induced by the bounded Lipschitz distance of their surface area measures.

\begin{Proposition}\label{prop:refinement}
Let $\{ C_n \}$ be a sequence of convex disks in $\KK_o$ and let $C \in \KK_o$. If  $C_n \to C$ with respect to the topology induced by $d_{PM}(\cdot,\cdot)$, then $C_n \to C$ with respect to the topology induced by $d_H(\cdot,\cdot)$. Furthermore, there is a sequence $\{ D_n \}$ of convex disks in $\KK_o$ and a convex disk $D \in \KK_o$ such that $D_n \to D$ with respect to $d_H(\cdot,\cdot)$, but $D_n \not\to D$ with respect to $d_{PM}(\cdot,\cdot)$.
\end{Proposition}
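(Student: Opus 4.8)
The plan is to prove the two assertions separately. For the refinement, suppose $C_n\to C$ in $\KK_o^{PM}$; by the definition of $d_{PM}$ this amounts to the existence of numbers $\tau_n\to 0$ with $|A_{C_n}(\sigma)-A_C(\sigma)|\le \tau_n\,\HH(\sigma)$ for every Borel set $\sigma\subseteq\S^1$. Applying this inequality to the positive and the negative set of the signed measure $A_{C_n}-A_C$ and using $\HH(\S^1)=2\pi$, one gets $\|A_{C_n}-A_C\|_{\mathrm{TV}}\le 2\pi\tau_n\to 0$; that is, the surface area measures of $C_n$ converge to that of $C$ in total variation. So the whole problem reduces to the classical fact (which I would nonetheless reprove) that total variation --- indeed, even weak --- convergence of surface area measures of convex bodies, together with the bodies staying in a fixed disk, forces Hausdorff convergence.

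The cleanest way I see to finish is via support functions. Write $h_K(\theta)$ for the support function of $K\in\KK_o$ as a function of the angle $\theta\in\Re/2\pi\Z$; since $K=-K$, $h_K$ is $\pi$-periodic, and likewise $A_K$ is an antipodally symmetric (hence $\pi$-periodic) measure, related to $h_K$ by $A_K=(h_K''+h_K)\,d\theta$ in the distributional sense (this is the identity behind the radius-of-curvature formula). Thus $g_n:=h_{C_n}-h_C$ is a continuous $\pi$-periodic function satisfying $g_n''+g_n=A_{C_n}-A_C=:\mu_n$ distributionally, with $\|\mu_n\|_{\mathrm{TV}}\to 0$. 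On the space of $\pi$-periodic functions the operator $\frac{d^2}{d\theta^2}+1$ is invertible (its $2\pi$-periodic kernel is spanned by $\cos\theta$ and $\sin\theta$, which are $\pi$-antiperiodic), so convolving $\mu_n$ against the corresponding bounded Green's function $G$ on the circle of circumference $\pi$ gives $\|h_{C_n}-h_C\|_\infty=\|G*\mu_n\|_\infty\le \|G\|_\infty\,\|\mu_n\|_{\mathrm{TV}}\to 0$, and uniform convergence of support functions is exactly Hausdorff convergence, proving the first claim. (Alternatively: the perimeters $A_{C_n}(\S^1)$ converge hence are bounded, so by $o$-symmetry the $C_n$ lie in a fixed disk; a Blaschke selection argument together with the weak continuity of the surface area measure and Minkowski's uniqueness theorem --- see \cite{Schneider} --- then yields the same conclusion, once one rules out degenerate subsequential limits using that $A_C$ is neither $0$ nor a pair of antipodal atoms.)

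For the second assertion I would take $D=B^2$ and let $\{D_n\}$ be any sequence of $o$-symmetric convex polygons with $D_n\to B^2$ in the Hausdorff metric --- for instance circumscribed regular polygons with an even number of sides. Then $A_{B^2}=\HH$ is non-atomic, whereas $A_{D_n}$ is purely atomic, carrying an atom of mass equal to the length of the corresponding edge at each outer unit normal $u$ of an edge of $D_n$. Testing the defining inequality of $d_{PM}$ on $\sigma=\{u\}$ for such a $u$, it reads $A_{D_n}(\{u\})\le \tau\cdot\HH(\{u\})=0$, which no $\tau\ge 0$ satisfies; hence $d_{PM}(D_n,B^2)=\infty$ for every $n$, and in particular $D_n\not\to B^2$ in $\KK_o^{PM}$.

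The only genuine work is in the refinement: the point is to upgrade the pointwise inequality defining $d_{PM}$ to convergence of surface area measures, and then to solve the Minkowski problem stably --- either abstractly, via continuity of the surface area map plus Minkowski uniqueness, with a little care to exclude degenerate limits, or concretely, via the bounded inverse of $\frac{d^2}{d\theta^2}+1$ on $\pi$-periodic functions. The second assertion is essentially immediate, since $d_{PM}$ is infinite as soon as the two surface area measures differ by an atom, which automatically happens between a polygon and a smooth body.
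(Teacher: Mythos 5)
Your proposal is correct, and while its overall architecture matches the paper's (reduce the first assertion to a stability statement for surface area measures; kill the second with an atom of a polygon-like body's surface area measure), the key step of the first assertion is handled by a genuinely different mechanism. The paper deduces from $|A_{C_n}(\sigma)-A_C(\sigma)|\leq\varepsilon\,\HH(\sigma)$ that the perimeters converge, infers uniform inradius/circumradius bounds $r B^2\subseteq C_n, C\subseteq R B^2$, and then quotes the stability version of Minkowski's uniqueness theorem (\cite[Theorem 8.5.1]{Schneider}), which gives $d_H(C_n',C)\leq\gamma\varepsilon^{1/2}$ for a translate $C_n'$, the translation being removed by $o$-symmetry. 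You instead upgrade the defining inequality to a total-variation bound $\|A_{C_n}-A_C\|_{\mathrm{TV}}\leq 2\pi\tau_n$ via the Hahn decomposition and then invert $\frac{d^2}{d\theta^2}+1$ on $\pi$-periodic functions through its bounded Green's function, using that the kernel $\lin\{\cos\theta,\sin\theta\}$ is $\pi$-antiperiodic; this is self-contained apart from the distributional identity $A_K=(h_K''+h_K)\,d\theta$ for planar convex bodies, it exploits the $o$-symmetry exactly where it is needed, and it even yields a linear estimate $\|h_{C_n}-h_C\|_\infty\leq c\,\tau_n$ rather than the $\varepsilon^{1/2}$ rate the paper gets from the general stability theorem (your alternative Blaschke-selection argument is the soft, non-quantitative version of the paper's route and is also fine, given your remark excluding degenerate limits). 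For the second assertion both arguments are the same in substance: an atom of $A_{D_n}$ at a single direction $u$ with $\HH(\{u\})=0$ forces $d_{PM}(D_n,D)=\infty$; the paper caps $B^2$ with the points $\pm(1+\tfrac1n,0)$ while you take circumscribed symmetric polygons, and your choice of test direction (an edge normal of $D_n$) is the right one — note that in the paper's own example the singleton should likewise be an edge normal of $D_n$ rather than $\{(0,1)\}$, at which $A_{D_n}$ has no atom.
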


\begin{proof}
Assume that $C_n \to C$ with respect to $d_{PM}(\cdot,\cdot)$, and let $\varepsilon > 0$ be a sufficiently small fixed value. Then, if $n$ is sufficiently large, for any Borel set $\sigma \in \S^1$, we have
\[
|A_{C_n}(\sigma) - A_C(\sigma)| \leq \varepsilon \cdot \HH(\sigma) \leq 2 \pi \varepsilon.
\]
In particular, we obtain that if $n$ is sufficiently large, then $|\perim(C_n)-\perim(C)| \leq 2 \pi \varepsilon$, implying that for some $0 < r < R$, each $C_n$ and $C$ contains $r B^2$ and is contained in $R B^2$. Thus, by \cite[Theorem 8.5.1]{Schneider}, we have that if $\varepsilon$ is sufficiently small, then, for some translate $C_n'$ of $C_n$, we have $d_H(C_n',C) \leq \gamma \varepsilon^{1/2}$ for all sufficiently large values of $n$, where $\gamma$ depends only on $r$ and $R$. On the other hand, since $C_n$ and $C$ are $o$-symmetric, this implies $d_H(C_n,C) < \gamma \varepsilon^{1/2}$, and hence, $C_n \to C$ with respect to Hausdorff distance.

Now, let $D = B^2$, and $D_n = \conv (K \cup \{p_n, -p_n \})$, where $p_n=\left( 1+\frac{1}{n}, 0  \right)$. Then, clearly, $D_n \to D$ with respect to Hausdorff distance. On the other hand, setting $\sigma =  \{ (0,1) \}$, we have that $A_{D_n}(\sigma) > 0$, and $A_D(\sigma) = \HH(\sigma) = 0$, showing that $d_{PM}(D_n,D) = \infty$ for all values of $n$.
\end{proof}

\begin{Corollary}\label{cor:refinement}
The topology on $\KK_o$ induced by $d_{PM}(\cdot,\cdot)$ is a refinement of the topology induced by $d_H(\cdot,\cdot)$.
\end{Corollary}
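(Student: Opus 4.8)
The plan is to deduce the corollary formally from Proposition~\ref{prop:refinement} together with the fact that both topologies involved are metrizable, in the extended sense that the underlying distance is allowed to take the value $+\infty$. First I would check that $d_{PM}(\cdot,\cdot)$ is indeed an extended metric on $\KK_o$: symmetry is immediate from the definition; the triangle inequality follows by adding the defining inequalities $|A_{C}(\sigma)-A_{D}(\sigma)|\le \tau_1\HH(\sigma)$ and $|A_{D}(\sigma)-A_{E}(\sigma)|\le \tau_2\HH(\sigma)$, which yields $|A_C(\sigma)-A_E(\sigma)|\le (\tau_1+\tau_2)\HH(\sigma)$ for every Borel set $\sigma\subseteq\S^1$; and $d_{PM}(C,D)=0$ forces $A_C=A_D$, hence $C=D$ by the uniqueness part of Minkowski's theorem and the $o$-symmetry of $C$ and $D$. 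Consequently $\KK_o^{PM}$, exactly like $\KK_o^H$, is first countable, so that a topology $\tau'$ on $\KK_o$ refines a topology $\tau$ if and only if every $\tau'$-convergent sequence is $\tau$-convergent with the same limit --- uniqueness of limits being automatic since both topologies are Hausdorff.

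Granting this reformulation, the corollary is immediate. The first assertion of Proposition~\ref{prop:refinement} says precisely that $C_n\to C$ with respect to $d_{PM}$ implies $C_n\to C$ with respect to $d_H$; by the previous paragraph this is exactly the statement that $\KK_o^{PM}$ is a refinement of $\KK_o^H$. To see in addition that this refinement is proper, I would invoke the second assertion of Proposition~\ref{prop:refinement}: the sequence $\{D_n\}$ constructed there converges to $D=B^2$ with respect to $d_H$ but satisfies $d_{PM}(D_n,D)=\infty$ for every $n$, so it does not converge in $\KK_o^{PM}$; hence the two topologies cannot coincide.

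Since the whole argument is purely formal once Proposition~\ref{prop:refinement} is available, I do not expect a genuine obstacle. The only step deserving a little care is the verification that $d_{PM}$ is an extended metric, since this is what licenses passing between the open-set description of a refinement and the convergent-sequence criterion that Proposition~\ref{prop:refinement} actually supplies.
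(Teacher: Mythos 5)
Your argument is correct and follows essentially the route the paper intends: the corollary is stated without a separate proof precisely because it is the immediate consequence of Proposition~\ref{prop:refinement} that you spell out, with the sequential statement upgraded to a comparison of topologies via first countability of the (extended-)metric topologies. Your added verification that $d_{PM}$ is an extended metric (triangle inequality, and $d_{PM}(C,D)=0$ forcing $C=D$ through Minkowski's uniqueness theorem and $o$-symmetry) is a sound way to make that passage explicit, not a different method.
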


Based on the proof of Proposition~\ref{prop:refinement}, we remark that if $D_n \to D$ with respect to $d_{PM}(\cdot,\cdot)$, then we have that for sufficiently large values of $D$, the family of the outer unit normal vectors of the smooth points of $D_n$ coincides with the same set for $D$.
Furthermore, we note that Proposition~\ref{prop:refinement} indicates that a set in $\KK_o$ which is residual with respect to $d_H(\cdot,\cdot)$ is not necessarily residual in the topology induced by $d_{PM}(\cdot,\cdot)$.

\begin{Remark}\label{rem:isolated}
Let $K \in \KK_o$ contain a segment $[p,q]$ on its boundary. Assume that $L \in \KK_o$ satisfies $d_{PM}(K,L)< \infty$. Then, from the definition of $d_{PM}(\cdot,\cdot)$, we obtain that $L$ contains a translate of $[p,q]$ on its boundary of the same length. In particular, it follows that if $P$ and $Q$ are two polygons with $d_{PM}(P,Q) < \infty$, then $Q$ is a translate of $P$. Thus, the topology induced by $d_{PM}(\cdot,\cdot)$ on the family of $o$-symmetric convex polygons is the discrete topology.
\end{Remark}


\section{Proof of Theorem~\ref{thm:old}}\label{sec:old}

Let $C \in \KK_{o,2+}$, and let $K$ be a $C$-convex disk. Without loss of generality, we assume that the curvature of a point of $\bd(K)$ is strictly greater than the curvature of $\bd(C)$ at the point with the same outer unit normal.

\begin{Remark}\label{rem:longestchord}
By our conditions about the curvatures at the corresponding points of $\bd (K)$ and $\bd (C)$, it is easy to see that for any points $p,q \in \bd (K)$, the set $[p,q]_C \setminus \{ p,q \}$ is contained in $\inter (K)$.
\end{Remark}

Let us parametrize $\bd(K)$ and $\bd(C)$ as the curves $\Gamma : [0,\perim(K)] \to \Re^2$ and $\Theta : [0,\perim(C)] \to \Re^2$, respectively, using arclength-parametrization and running along the boundaries of $C$ and $K$ in positive orientation. We choose the starting points $\Gamma(0)$ and $\Theta(0)$ in such a way that here $\bd(K)$ and $\bd(C)$ have the same outer unit normal.
For any two points $p=\Gamma(s_1), q=\Gamma(s_2)$, let us denote the arc $\widehat{pq}$ of $\Gamma$ by $\Gamma|_{[s_1,s_2]}$, and
set $A(s_1,s_2) = \area(r_K(\Gamma(s_1),\Gamma(s_2)))$.

Our main tool will be the following. First, we compute the mixed partial derivative $(\partial_s \partial_t A)(\bar{s},\bar{t})$ for any points $p=\Gamma(\bar{s}), q=\Gamma(\bar{t})$,  where $\widehat{pq}$ has turning angle strictly less than $\pi$.
We note that our formula for $(\partial_s \partial_t A)(\bar{s},\bar{t})$ will be valid for any $C \in \KK_{o,2+}$, not only for $B^2$.
In the second part, we show that if $C=B^2$, then this derivative is nonnegative, and apply Lemma~\ref{lem:derivative}.

\subsection{Deriving a formula for the mixed partial derivative $(\partial_s \partial_t A)(\bar{s},\bar{t})$}\label{subsec:formula}

Without loss of generality, let $p=\Theta(\bar{S}), q=\Theta(\bar{T})$, where we assume that $0 < \bar{s} < \bar{t} < \perim(K)$ and $0 < \bar{S} < \bar{T} < \perim(C)$.

Note that by Remark~\ref{rem:longestchord}, for any $0 < s < t < \perim(K)$, there are unique values $0 < S' < T' < \perim(C)$ with the property that $\Theta(T')-\Theta(S') = \Gamma(t)-\Gamma(s)$. Thus, if we set $H = \{ (s,t) : 0 < s < t < \perim(K) \}$, we may define the functions $S, T : H \to \Re$ to identically satisfy $\Theta(T(s,t))-\Theta(S(s,t)) = \Gamma(t)-\Gamma(s)$. Note that $\bar{S}=S(\bar{s},\bar{t})$ and $\bar{T}=T(\bar{s},\bar{t})$. More specifically, consider the function $f(s,t,S',T')= \Theta(S')-\Theta(T')+\Gamma(s)-\Gamma(t)$. This is a twice continuously differentiable function in a neighborhood of $(\bs, \bt, \bS, \bT)$, and satisfies $f(\bs,\bt,\bS,\bT)=0$. Its Jacobian is the $2 \times 4$ matrix $[\Gamma'(s) \quad -\Gamma'(t) | -\Theta'(S')  \quad \Theta'(T')]$. Since $\Theta'(\bS)$ and $\Theta'(\bT)$ are not parallel, the matrix $[-\Theta(S') \quad \Theta'(T')]$ is invertible. Thus, by the Implicit Function Theorem, the functions $S(s,t)$ and $T(s,t)$ exist and are twice continuously differentiable in a neighborhood of $(\bs,\bt)$.

Our next remark will play an important role in the proof.

\begin{Remark}\label{rem:order}
Since $K$ is $C$-convex, and $[p,q]_C$ is contained in $\inter(K)$ apart from $p,q$, we immediately have that the following vectors are in this counterclockwise order around $o$: $p-q$, $- \Theta'(\bT)$, $- \Gamma'(\bt)$, $\Gamma'(\bs)$, $\Theta'(\bS)$, $q-p$, $\Theta'(\bT)$, $\Gamma'(\bt)$, $-\Gamma'(\bs)$, $-\Theta'(\bS)$, $p-q$.
\end{Remark}

\begin{figure}[ht]
\begin{center}
 \includegraphics[width=0.4\textwidth]{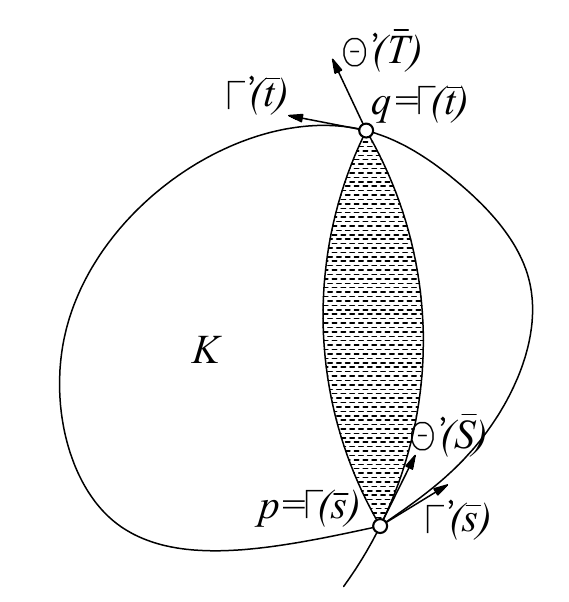}
 \caption{The vectors described in Remark~\ref{rem:order}. The $C$-spindle $[p,q]_C$ is illustrated by a dashed region.}
\label{fig:directions}
\end{center}
\end{figure}

We denote the oriented angles of the vectors listed in Remark~\ref{rem:order}, measured from the positive half of the $x$-axis, by $\alpha, \theta_q-\pi, \gamma_p, \theta_p, \alpha+\pi, \theta_q, \gamma_q, \theta_p+\pi, \alpha+2\pi$, respectively. 

\begin{Lemma}\label{lem:partials}
Let $\kappa_p, \kappa_q$ denote the curvatures of $\bd(C)$ at $p$ and $q$, respectively. Then
\[
(\partial_s S)(\bs, \bt) = \frac{\sin (\theta_q - \gamma_p)}{\sin(\theta_q-\theta_p)}, \, (\partial_t S)(\bs, \bt) = \frac{\sin(\gamma_q-\theta_q)}{\sin(\theta_q-\theta_p)},
\]
\[
(\partial_s T)(\bs, \bt) = \frac{\sin (\theta_p - \gamma_p)}{\sin(\theta_q-\theta_p)}, \, (\partial_t T)(\bs, \bt) = \frac{\sin(\gamma_q-\theta_p)}{\sin(\theta_q-\theta_p)}, \hbox{ and}
\]
\begin{multline*}
(\partial_s \partial_t S)(\bs, \bt) = \\
= \frac{\kappa_p \sin (\gamma_q-\theta_q) \cos (\theta_q-\theta_p) \sin(\theta_q-\gamma_p) - \kappa_q \sin (\theta_p - \gamma_p) \sin(\gamma_q-\theta_p)}{\sin^3 (\theta_q - \theta_p)}.
\end{multline*}
\end{Lemma}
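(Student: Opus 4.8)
The plan is to obtain all six partial derivatives by implicit differentiation of the defining identity
\[
\Theta(T(s,t)) - \Theta(S(s,t)) = \Gamma(t) - \Gamma(s),
\]
which we already know (from the Implicit Function Theorem argument preceding the lemma) defines $S,T$ as twice continuously differentiable functions near $(\bs,\bt)$. Differentiating once in $s$ gives $\Theta'(\bT)\,\partial_s T - \Theta'(\bS)\,\partial_s S = -\Gamma'(\bs)$, and once in $t$ gives $\Theta'(\bT)\,\partial_t T - \Theta'(\bS)\,\partial_t S = \Gamma'(\bt)$. Each of these is a $2\times 2$ linear system whose coefficient matrix has columns $-\Theta'(\bS)$ and $\Theta'(\bT)$; since $\Theta$ is arclength-parametrized, $\Theta'(\bS)$ and $\Theta'(\bT)$ are unit vectors making angles $\theta_p$ and $\theta_q$ with the $x$-axis, and likewise $\Gamma'(\bs),\Gamma'(\bt)$ are unit vectors at angles $\gamma_p,\gamma_q$. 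Solving by Cramer's rule, every determinant is of the form $|u,v| = \sin(\angle v - \angle u)$ for unit vectors $u,v$, so the denominator becomes $|-\Theta'(\bS),\Theta'(\bT)| = \sin(\theta_q-\theta_p)$ and the numerators are the stated sines of differences of the angles $\theta_p,\theta_q,\gamma_p,\gamma_q$. One only has to be careful with orientations and signs; Remark~\ref{rem:order} pins down the cyclic order of all the relevant direction vectors, which is exactly what guarantees the sign conventions used in the statement (in particular $\sin(\theta_q-\theta_p)>0$, so none of these expressions are singular).

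For the mixed second derivative $(\partial_s\partial_t S)(\bs,\bt)$, I would differentiate the first-order identity $\Theta'(\bT)\,\partial_s T - \Theta'(\bS)\,\partial_s S = -\Gamma'(\bs)$ with respect to $t$. The right-hand side does not depend on $t$, so we get
\[
\Theta''(\bT)\,(\partial_t T)(\partial_s T) + \Theta'(\bT)\,\partial_t\partial_s T
- \Theta''(\bS)\,(\partial_t S)(\partial_s S) - \Theta'(\bS)\,\partial_t\partial_s S = 0.
\]
Here $\Theta''$ at an arclength-parametrized point is curvature times the inward unit normal, i.e. $\Theta''(\bS) = \kappa_p\,\Theta'(\bS)^{\perp}$ (rotation of the tangent by $+\pi/2$), and similarly $\Theta''(\bT)=\kappa_q\,\Theta'(\bT)^{\perp}$. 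This again reduces the problem to a $2\times 2$ linear system for the pair $(\partial_t\partial_s S,\partial_t\partial_s T)$ with the same coefficient matrix $[-\Theta'(\bS)\ \Theta'(\bT)]$, and with a known inhomogeneous term built out of the already-computed first derivatives and the two curvature vectors. Applying Cramer's rule once more and substituting the first-derivative formulas, every dot product or determinant of the unit vectors involved collapses to a sine or cosine of an angle difference; collecting terms should yield exactly
\[
(\partial_s\partial_t S)(\bs,\bt)
= \frac{\kappa_p \sin(\gamma_q-\theta_q)\cos(\theta_q-\theta_p)\sin(\theta_q-\gamma_p) - \kappa_q \sin(\theta_p-\gamma_p)\sin(\gamma_q-\theta_p)}{\sin^3(\theta_q-\theta_p)}.
\]
The extra power of $\sin(\theta_q-\theta_p)$ in the denominator compared to the first derivatives comes from the fact that the two first-order partials contributing to the $\Theta''$ terms each carry one factor $1/\sin(\theta_q-\theta_p)$, and Cramer's rule contributes another.

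I expect the main obstacle to be purely bookkeeping: keeping the angle conventions from Remark~\ref{rem:order} consistent throughout, and verifying that the many trigonometric identities needed to simplify the raw Cramer's-rule output (products of sines and cosines of sums and differences of $\theta_p,\theta_q,\gamma_p,\gamma_q$) really do combine into the compact asymmetric form above — in particular that the $\cos(\theta_q-\theta_p)$ factor appears on the $\kappa_p$ term but not on the $\kappa_q$ term. A useful internal check is the limiting case $\Gamma = \Theta$ (so $K$ is a translate of $C$), where $r_K(\cdot,\cdot)$ degenerates and the second derivative should vanish; another is to specialize to $C = B^2$, where $\kappa_p=\kappa_q=1$ and the formula must reduce to something whose sign is controlled, since that is precisely what Section~\ref{sec:old} needs next. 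Once the formula for $\partial_s\partial_t S$ is in hand, it is worth remarking that the analogous formula for $\partial_s\partial_t T$ is obtained by the same computation (or by the symmetry $S \leftrightarrow T$, $\kappa_p\leftrightarrow\kappa_q$, with appropriate angle relabelling), though the statement as given only records the one needed downstream.
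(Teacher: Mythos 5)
Your proposal is correct and follows essentially the same route as the paper: implicit differentiation of $\Theta(T(s,t))-\Theta(S(s,t))=\Gamma(t)-\Gamma(s)$, evaluation of the resulting $2\times 2$ determinants of unit tangent vectors as sines of the angles $\theta_p,\theta_q,\gamma_p,\gamma_q$, and the curvature expression for $\Theta''$. The only (cosmetic) difference is in the mixed partial: you differentiate the first-order vector identity again and re-solve by Cramer's rule, while the paper differentiates the already-solved quotient formula for $\partial_t S$; both computations yield the stated formula, and yours is if anything slightly tidier.
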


\begin{proof}
By the Implicit Function Theorem, in a neighborhood of $(\bs, \bt)$ we have
\begin{equation}\label{eq:extra}
\left[ \begin{array}{cc}
\partial_s S & \partial_t S\\
\partial_s T & \partial_t T
\end{array} \right] =
-[ -\Theta'(S) \quad \Theta'(T) ]^{-1} [ \Gamma'(s) \quad -\Gamma'(t) ].
\end{equation}
For $* \in \{ S,T \}$, let us denote the $x$ and $y$ coordinates of $\Theta'(*)$ with $\Theta'_x(*)$ and $\Theta'_y(*)$, respectively.
Then
\[
[ -\Theta'(S) \quad \Theta'(T) ]^{-1} = \frac{1}{| \Theta'(S) , \Theta'(T)| }
\left[ \begin{array}{cc}
\Theta'_y(T) & -\Theta'_x(T)\\
\Theta'_y(S) & -\Theta'_x(S)
\end{array} \right] .
\]
Computing the product of the two matrices in (\ref{eq:extra}), we obtain that $\partial_s S = \frac{| \Gamma'(s)  , \Theta'(T)|}{| \Theta'(S)  , \Theta'(T)|}$. Since $\Gamma$ and $\Theta$ are parametrized by arclength, the vectors appearing in this expression are unit vectors. Since for any vectors $u,v \in \Re^2$, the determinant $|u , v |$ is the signed area of the parallelogram spanned by them, we obtain that $| \Gamma'(\bs)  , \Theta'(\bT)| = \sin(\theta_q-\gamma_p)$ and $| \Theta'(\bS)  , \Theta'(\bT)| = \sin (\theta_q-\theta_p)$. We obtain the other three first partial derivatives in a similar way; in particular we have $\partial_t S= \frac{|\Theta'(T), \Gamma'(t)|}{| \Theta'(S) , \Theta'(T)|}$. In order to compute $(\partial_s \partial_t S)(\bs, \bt)$, we differentiate this expression, and obtain that

\begin{multline}\label{eq:dsdtS}
\partial_s \partial_t S= \frac{1}{{| \Theta'(S) , \Theta'(T)|^2}} \bigl( | \Theta''(T), \Gamma'(t)| (\partial_s T) | \Theta'(S) , \Theta'(T)| - \\
- |\Theta'(T) , \Gamma'(t)| \left( | \Theta''(S) , \Theta'(T)| (\partial_s S) + | \Theta'(S) , \Theta''(T)| (\partial_s T) \right) \bigr).
\end{multline}

Note that by arclength-parametrization, we have
\begin{equation}\label{eq:curvature}
\Theta''(\bS) = \kappa_p \left[ \begin{array}{c} \cos\left(\theta_p + \frac{\pi}{2} \right)\\ \sin\left(\theta_p + \frac{\pi}{2} \right) \end{array} \right], \quad
\Theta''(\bT) = \kappa_q \left[ \begin{array}{c} \cos\left(\theta_q + \frac{\pi}{2} \right)\\ \sin\left(\theta_q + \frac{\pi}{2} \right) \end{array} \right].
\end{equation}
Thus, in order to derive the expression for $(\partial_s \partial_t S)(\bs, \bt)$ in Lemma~\ref{lem:partials}, we substitute these expressions and our formulas for the first partial derivatives of $S$ and $T$ into (\ref{eq:dsdtS}), and use trigonometric identities.
\end{proof}

Now, let $\bar{A}(s,t)= \area(K)-A(s,t)$, and we prove that $(\partial_s \partial_t \bar{A})(\bs,\bt) \geq 0$.
Let $(s,t)$ be sufficiently close to $(\bs,\bt)$. We compute $\bar{A}(s,t)-\bar{A}(\bs,\bt)$.

Note that for any $s,t$, there is a unique vector $x(s,t)$ such that  $x(s,t)+C$ contains the oriented $C$-arc from $\Gamma(s)$ to $\Gamma(t)$.
In order to compute this vector, we observe that $\Gamma(s)=x(s,t) + \Theta(S(s,t))$ and $\Gamma(t)=x(s,t) + \Theta(T(s,t))$, i.e.
\begin{equation}\label{eq:x}
x(s,t)= \Gamma(s) - \Theta(S(s,t)) = \Gamma(t) - \Theta(T(s,t)).
\end{equation}
By the properties of $S(s,t)$ and $T(s,t)$, the function $x(s,t)$ has continuous second partial derivatives.

Let $R$ denote the region swept by the $C$-arc from $p$ to $q$ as it is translated in the direction of $x$ to its translate by $x$ (see Figure~\ref{fig:areas}). We denote the signed area of $R$ by $A(R)$, where the sign is defined such that the part of $R$ inside $K$ has negative area, and the part outside it has positive area.

Let $X_p$ denote the closed oriented curve obtained by joining the oriented segment from $p$ to $x+p$, the arc $x+\Theta([\bS, S(s,t)])$ oriented from $x+p$ to $\Gamma(s)$, and the arc $\Gamma([\bs,s])$ oriented from $\Gamma(s)$ to $p$. 
Similarly, let $X_q$ denote the closed oriented curve obtained by joining the oriented segment from $q$ to $x+q$, the arc $x+\Theta([\bT, T(s,t)])$ oriented from $x+q$ to $\Gamma(t)$, and the arc $\Gamma([\bt,t])$ oriented from $\Gamma(t)$ to $q$. Let $R_p$ and $R_q$ denote the region enclosed by $X_p$ and $X_q$, respectively. For $* \in \{ p,q \}$, we say that the signed area $A(R_*)$ of the region $R_*$ is positive or negative if $X_*$ runs in counterclockwise or clockwise direction on its boundary. Then
\begin{equation}\label{eq:areadiff}
\bar{A}(s,t)-\bar{A}(\bs,\bt)= A(R) -A(R_p) + A(R_q).
\end{equation}

\begin{figure}[ht]
\begin{center}
 \includegraphics[width=0.43\textwidth]{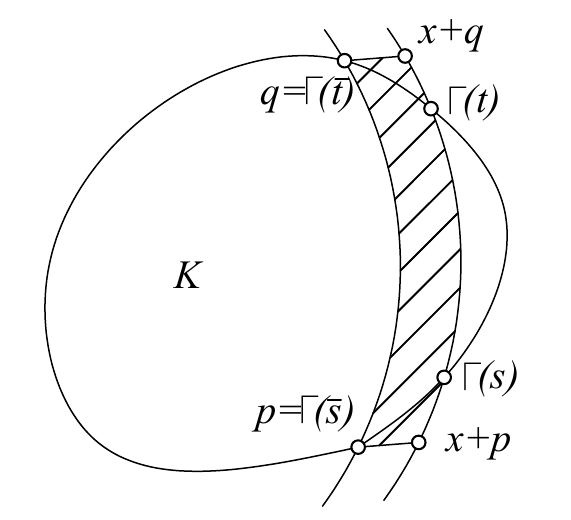}
 \caption{An illustration for the notation in the proof of Theorem~\ref{thm:old}. The region $R$ is denoted by skew lines.}
\label{fig:areas}
\end{center}
\end{figure}

In the following part we compute the second mixed derivatives of $A(R)$, $A(R_p)$ and $A(R_q)$.

First, we compute the value of $\partial_s \partial_t A(R)$ at $(\bs,\bt)$.
By Fubini's theorem, $A(R)$ is equal to the signed area of the parallelogram spanned by $x$ and $q-p$, or more precisely, $A(R) = |x,q-p|=|p-q,x|$.
Since $p-q$ is a constant, we have $\partial_s \partial_t A(R) = | p-q,\partial_s \partial_t x |$. Now, by (\ref{eq:x}), we have
\[
\partial_s \partial_t x = \partial_s \left( - \Theta'(S) (\partial_t S) \right) = - \Theta''(S) (\partial_s S)(\partial_t S) - \Theta'(S) (\partial_s\partial_t S).
\]
The formulas in (\ref{eq:curvature}) yield that
\[
\frac{|p-q,(\partial_s \partial_t x)(\bs,\bt)|}{||p-q||} =  - \kappa_p \sin \left( \theta_p + \frac{\pi}{2} - \alpha \right) (\partial_s S)(\partial_t S) - \sin(\theta_q-\alpha) (\partial_s\partial_t S).
\]
Thus, by Lemma~\ref{lem:partials} and using trigonometric identities, we obtain that
\begin{multline}\label{eq:dsdtx}
\frac{|p-q,(\partial_s \partial_t x)(\bs,\bt)|}{||p-q||} = \frac{1}{\sin^3 (\theta_q-\theta_p)} \left( 
\kappa_p \sin(\alpha-\theta_q) \sin(\theta_q-\gamma_p) \sin (\gamma_q-\theta_q) + \right. \\
\left.  + \kappa_q \sin (\theta_p-\alpha) \sin(\theta_p-\gamma_p) \sin(\gamma_q-\theta_p) \right) .
\end{multline} 
Note that by Remark~\ref{rem:order}, all angles in (\ref{eq:dsdtx}) are convex angles. This and the inequalities $\kappa_p, \kappa_q, ||p-q|| > 0$ imply that $|p-q,(\partial_s \partial_t x)(\bs,\bt)| > 0$.

Next, we compute the value of $\partial_s \partial_t A(R_p)$ at $(\bs,\bt)$. 
Recall the well-known formula that if $G : [s_1,s_2] \to \Re^2$ is an arclength-parametrized curve enclosing a region and running on its boundary in counterclockwise direction, then the area of the enclosed region is $\frac{1}{2} \int_{s_1}^{s_2} | G(s), G'(s)| \, ds$. Using this fact, we obtain that 
\begin{equation}\label{eq:ARp}
2 A(R_p)= \int_{\bS}^{S(s,t)} |x(s,t) + \Theta(u) - \Gamma(\bs), \Theta'(u) | \, du + \int_{s}^{\bs} | \Gamma(u)-\Gamma(\bs), \Gamma'(u)| \, du. 
\end{equation}
Note that $\partial_t \int_{s}^{\bs} | \Gamma(u)-\Gamma(\bs), \Gamma'(u)| \, du = 0$. Furthermore, $S(s,t)$ has continuous second partial derivatives, and the function $\partial_t |x + \Theta(u) - \Gamma(\bs), \Theta'(u) |$ is continuous in an open neighborhood of the set $\{ (\bs,u) : u \in [\bS, S(s,t)] \}$. Thus, we may apply Leibniz's integral rule and obtain that
\[
\partial_t 2A(R_p) = | x(s,t) + \Theta(S(s,t)) - \Gamma(\bs), \Theta'(S(s,t)) | (\partial_t S) + \int_{\bS}^{S(s,t)} | (\partial_t x), \Theta'(u) | \, du
\]
Again applying Leibniz's integral rule and the identities $x(\bs,\bt)=0$, $\Theta(\bS)=\Gamma(\bs)$, we have 
\begin{multline*}
\left. \partial_s \partial_t 2A(R_p) \right|_{(s,t)=(\bs,\bt)} = \\
= | (\partial_s x)(\bs,\bt) + \Theta'(\bS) (\partial_s S)(\bs,\bt), \Theta'(\bS) | (\partial_t S)(\bs,\bt)
+ | 0, \Theta''(\bS)| (\partial_s S)(\bs,\bt) (\partial_t S)(\bs,\bt)+ \\
+ | 0, \Theta'(\bS) | (\partial_s \partial_t S)(\bs,\bt) + | (\partial_t x)(\bs, \bt), \Theta'(\bS)| (\partial_s S)(\bs,\bt) + \int_{\bS}^{\bS} | (\partial_s \partial_t x)(\bs,\bt), \Theta'(u)| \, du.
\end{multline*} 
This yields that
\[
\left. \partial_s \partial_t 2A(R_p) \right|_{(s,t)=(\bs,\bt)} = | (\partial_s x)(\bs,\bt), \Theta'(\bS) | (\partial_t S)(\bs,\bt) + | (\partial_t x)(\bs, \bt), \Theta'(\bS)| (\partial_s S)(\bs,\bt).
\]
 Now (\ref{eq:x}) yields $(\partial_s x)(\bs,\bt) = - \Theta'(\bT) (\partial_s T)(\bs,\bt)$ and $(\partial_t x)(\bs,\bt) = - \Theta'(\bS) (\partial_t S)(\bs,\bt)$.
Substituting these into the above expression yields that
\[
\left. \partial_s \partial_t A(R_p) \right|_{(s,t)=(\bs,\bt)} = - \frac{1}{2} | \Theta'(\bT), \Theta'(\bS)| (\partial_s T)(\bs,\bt) (\partial_t S)(\bs,\bt).
\]
Thus, applying Lemma~\ref{lem:partials}, we have
\begin{equation}\label{eq:dsdtArp}
\left. \partial_s \partial_t A(R_p) \right|_{(s,t)=(\bs,\bt)} = \frac{\sin(\gamma_q-\theta_q) \sin (\theta_p-\gamma_p)}{2 \sin (\theta_q-\theta_p)}
\end{equation}

Finally, using a similar argument, we obtain that
\begin{equation}\label{eq:ARq}
2 A(R_q)= \int_{\bT}^{T(s,t)} |x(s,t) + \Theta(u) - \Gamma(\bt), \Theta'(u) | \, du + \int_{t}^{\bt} | \Gamma(u)-\Gamma(\bt), \Gamma'(u)| \, du, \hbox{ and}
\end{equation}
\begin{equation}\label{eq:dsdtArq}
\left. \partial_s \partial_t A(R_q) \right|_{(s,t)=(\bs,\bt)} = -\frac{\sin(\gamma_q-\theta_q) \sin (\theta_p-\gamma_p)}{2 \sin (\theta_q-\theta_p)}
\end{equation}
Thus,
\[
\left. \partial_s \partial_t A(R_q)-A(R_p) \right|_{(s,t)=(\bs,\bt)} = -\frac{\sin(\gamma_q-\theta_q) \sin (\theta_p-\gamma_p)}{\sin (\theta_q-\theta_p)},
\]
implying that
\begin{multline}\label{eq:genform}
(\partial_s \partial_t A)(\bar{s},\bar{t}) = \\
= -\frac{\sin(\gamma_q-\theta_q) \sin (\theta_p-\gamma_p)}{\sin (\theta_q-\theta_p)} + \frac{||p-q||}{\sin^3 (\theta_q-\theta_p)} \left(  \kappa_p \sin(\alpha-\theta_q) \sin(\theta_q-\gamma_p) \cdot \right. \\
\left. \cdot \sin (\gamma_q-\theta_q) + \kappa_q \sin (\theta_p-\alpha) \sin(\theta_p-\gamma_p) \sin(\gamma_q-\theta_p) \right) .
\end{multline}

\subsection{Determining the sign of the derivative if $C$ is a Euclidean disk}\label{subsec:Euclidean}

In the following, we assume that $C$ is a Euclidean disk of radius $r$, and compute the value of the expression in (\ref{eq:genform}).
Using the notation in this expression, we can assume that $\theta_p=\pi-\theta$ and $\theta_q=\pi+\theta$ for some $0 < \theta < \frac{\pi}{2}$, which, by Remark~\ref{rem:order}, implies that $\theta < \gamma_p < \pi - \theta$ and $\pi + \theta < \gamma_q < \gamma_p + \pi$. Moreover, we have $||p-q|| \kappa_ p= ||p-q|| \kappa_q =  2 \sin \theta$. Substituting these into (\ref{eq:genform}), we obtain 
\begin{multline}\label{eq:genform2}
(\partial_s \partial_t A)(\bar{s},\bar{t}) =
\frac{ 2 \sin^2{\theta} \left( \sin(\gamma_p-\theta) \sin (\theta-\gamma_q) - \sin(\theta + \gamma_p) \sin(\gamma_q+\theta) \right) }{\sin^3 (2 \theta)} - \\ -\frac{\sin(\theta- \gamma_q) \sin(\theta + \gamma_p)}{\sin(2 \theta)}.
\end{multline} 

Using the trigonometric identity $\sin^2 2\theta = 4 \sin^2 \theta \cos^2 \theta$, we obtain that
\begin{multline}\label{eq:genform3}
2 \cos^2 \theta \sin(2 \theta) (\partial_s \partial_t A)(\bar{s},\bar{t}) =
\sin(\gamma_p-\theta) \sin (\theta-\gamma_q) - \\
- \sin(\theta + \gamma_p) \sin(\gamma_q+\theta) - 2\cos^2 \theta \sin(\theta- \gamma_q) \sin(\theta + \gamma_p).
\end{multline}
By additional trigonometric identities, this expression simplifies to
\begin{multline}\label{eq:genform4}
2 \cos^2 \theta \sin(2 \theta) (\partial_s \partial_t A)(\bar{s},\bar{t}) = \\
= - \sin^2 \theta  \cos(\gamma_p + \gamma_q) - \cos(\gamma_p - \gamma_q)  + \cos^2 \theta \cos(2 \theta + \gamma_p - \gamma_q).
\end{multline}

Now, a simple computation shows that under our conditions and considering $\gamma_p - \gamma_q$ fixed, the quantity in (\ref{eq:genform4}) is minimal if and only if $\gamma_p+\gamma_q = 2\pi$, implying that
\begin{multline}\label{eq:genform5}
2 \cos^2 \theta \sin(2 \theta) (\partial_s \partial_t A)(\bar{s},\bar{t}) \geq \\
\geq - \sin^2 \theta - \cos(2 \gamma_p)  + \cos^2 \theta \cos(2 \theta + 2 \gamma_p).
\end{multline}
Note that $\gamma_p+\gamma_q = 2\pi$ also yields that $\frac{\pi}{2} < \gamma_p$. If we set $\gamma = \gamma_p - \frac{\pi}{2}$, then the right-hand side expression in (\ref{eq:genform5}) can be written as
\begin{multline*}
- \sin^2 \theta + \cos(2 \gamma)  - \cos^2 \theta \cos(2 \theta + 2 \gamma) = \\
= 2 \cos (\gamma + \theta) \left( \cos (\gamma - \theta) - \cos^2 \theta \cos (\gamma + \theta) \right),
\end{multline*}
where our conditions for the angles are transformed into the inequalities  $0 < \theta, \gamma$ and $\gamma + \theta < \frac{\pi}{2}$.
Thus, we have that $\cos (\gamma + \theta) > 0$, and also that $\cos (\gamma - \theta) > \cos (\gamma + \theta) > \cos^2 \theta \cos (\gamma + \theta)$.
This yields Theorem~\ref{thm:old}.

\section{Proof of Theorem~\ref{thm:counter}}\label{sec:counter}

Let $\KK_-$ denote the family of $o$-symmetric convex disks in $\KK_{o,2+}$ for which there is a $C$-convex disk $K$ that does not satisfy the Quadrangle Property. We show that $\KK_-$ is residual in $\KK_{o,2+}^{PM}$.

First, we show that $\KK_-$ is open in $\KK_{0,2+}^{H}$, which, by Proposition~\ref{prop:refinement}, yields that it is open in $\KK_{0,2+}^{PM}$.
Let $C \in \KK_-$, and suppose for contradiction that there is a sequence $C_m \notin \KK_-$ with $C_m \to C$.
Since $C \in \KK_-$, there is a $C$-convex disk $K$ and points $x_1,x_2,x_3,x_4$ in this counterclockwise order in $\bd(K)$, with turning angle strictly less than $\pi$, such that $\area(r_K^C(x_1,x_4))+\area(r_K^C(x_2,x_3)) < \area(r_K^C(x_1,x_3))+ \area(r_K^C(x_2,x_4))$.
As $K$ is $C$-convex, it is the intersection of some translates of $C$, i.e. there is some $X \subset \Re^2$ such that $K = \bigcap_{x \in X} (x+C)$.
Let $K_m = \bigcap_{x \in X} (x+C_m)$. Then, clearly, $K_m$ is $C_m$-convex, and $K_m \to K$. For $i=1,2,3,4$, let $x_i^m \in \bd(K_m)$ be a point with $x_i^m \to x_i$.
Note that if $m$ is sufficiently large, $x_1^m, x_2^m, x_3^m, x_4^m$ are in this counterclockwise order in $\bd(K^m)$, and the turning angle of $\widehat{x_1^mx_4^m}$ is strictly less than $\pi$. Thus, $\area(r_{K_m}^{C_m}(x_1^m,x_4^m))+\area(r_{K_m}^{C_m}(x_2^m,x_3^m)) \geq \area(r_{K_m}^{C_m}(x_1^m,x_3^m))+ \area(r_{K_m}^{C_m}(x_2^m,x_4^m))$. To the contrary, \cite[Remark 2]{BL} implies that $\area(r_K^C(x_1,x_4))+\area(r_K^C(x_2,x_3)) \geq \area(r_K^C(x_1,x_3))+ \area(r_K^C(x_2,x_4))$.

In the remaining part of the proof, we show that $\KK_-$ is everywhere dense in $\KK_{o,2+}^{PM}$.
Note that for any $C \in \KK_{o,2+}^{PM}$ and Borel set $\sigma \subset \S^1$, $A_C(\sigma) = \int R_C(u) \, d \sigma$, where $R_C(u)$ denotes the radius of curvature at the point of $\bd(C)$ with outer unit normal $u$. In particular, if for $C, C' \in \KK_{o,2+}$ we have that for any $u \in \S^1$, $|R_C(u)-R_{C'}(u)| \leq \varepsilon$, then $d_{PM}(C,C') \leq \varepsilon$.

Based on this, we observe that for any $\lambda_0 > 0$, the subfamily of $\KK_{o,2+}$ of the elements $C$ satisfying the property that there is a point $p \in \bd(C)$ where
\begin{itemize}
\item the curvature of $\bd(C)$ is locally maximal,
\item if $\Theta(s)$ is the arclength-parametrization of $\bd(C)$ with $p=\Theta(0)$, then $\Theta$ is a $C^{\infty}$-class function in a neighborhood of $0$,
\item If $\kappa(s)$ denotes the curvature of $\Theta(s)$, then $\kappa(s) = \kappa_0 - \lambda s^2 + O(s^3)$ for some $\kappa_0 > 0$ and any given $\lambda > \lambda_0$,
\end{itemize}
is everywhere dense in $\KK_{o,2+}^{PM}$.
Indeed, let $\Theta_0(s)$ be the arclength-pa\-ram\-e\-triza\-tion of the boundary of an arbitrary element $C_0$ of $\KK_{o,2+}$. Without loss of generality, assume that the curvature $\kappa_0(s)$ of $\Theta_0(s)$ has a local maximum at $s=0$. 
Let $\varepsilon > 0$ and $\lambda > 0$ be fixed, and choose some $\delta > 0$ such that $|\kappa_0(s)-\kappa_0(0)| \leq \frac{\varepsilon}{2}$ for any $|s| \leq \delta$. Replace the function $\kappa_0(s)$ by the function $\kappa(s) = \kappa_0(0)-\lambda s^2$ in a small neighborhood $[-s_0,s_0]$ of $0$ in $(-\delta,\delta)$. We extend this function to the interval $[-\delta,\delta]$ in such a way that the obtained function $\kappa(s)$ is continuous and strictly positive, satisfies $|\kappa_0(s) - \kappa(s)| \leq \frac{\varepsilon}{2}$ for any $s \in [-s_0,s_0]$, and for $\bar{s} \in \{ -s_0,s_0 \}$, it satisfies
\begin{itemize}
\item[(i)] $\int_0^{\bar{s}} \kappa_0(t) \, dt = \int_0^{\bar{s}} \kappa(t) \, dt$;
\item[(ii)] $\int_0^{\bar{s}} \cos \int_0^t \kappa_0(u) \, du \, dt = \int_0^{\bar{s}} \cos \int_0^t \kappa(u) \, du \, dt$; and
\item[(iii)] $\int_0^{\bar{s}} \sin \int_0^t \kappa_0(u) \, du \, dt = \int_0^{\bar{s}} \sin\int_0^t \kappa(u) \, du \, dt$.
\end{itemize}
We observe that then $\kappa(s)$ is the curvature function of a convex disk in $\KK_{2+}$. Modifying also a neighborhood of the point $-\Gamma_0(0)$ in the same way yields an $o$-symmetric convex disk $C$ at PM-distance at most $\varepsilon$ from $C_0$ with a curvature function $\kappa(s)=\kappa_0-\lambda s^2$ in a neighborhood of $s=0$. 

Thus, in the following we assume that the above conditions hold for $C$ with $\lambda > \frac{9 \kappa_0^3}{2}$.
Let $\Theta(0)=(0,0)$ such that $\Theta'(0)$ points in the direction of the negative half of the $x$-axis.
As before, let $\kappa(s)$ denote the curvature of the curve $\Theta$ at the point $\Theta(s)$. We set $p=\Theta(-\bar{s})$ and $q=\Theta(\bar{s})$ for some sufficiently small value of $\bar{s}$, and set $\gamma_p = \frac{\pi}{2}+\kappa_0 \bs$ and $\gamma_q = \frac{3\pi}{2}-\kappa_0 \bs$. Note that then $\theta_p = \pi-\kappa_0 \bs +\frac{\lambda}{3} \bs^3$ and $\theta_q = \pi+\kappa_0 \bs -\frac{\lambda}{3} \bs^3$.

Then, computing the coordinates of $p$ and $q$ as in (ii) and (iii), we obtain 
\[
||p-q|| = 2\bs-\frac{\kappa_0^2}{3}\bs^3+ \frac{\kappa_0(8\lambda+\kappa_0^3)}{60} \bs^5 + O(\bs^7),
\]
and, from $p_y=q_y$ we obtain that $p-q$ points in the direction of the positive half of the $x$-axis. Substituting these into (\ref{eq:genform}), we obtain that
\[
(\partial_s \partial_t A)(-\bar{s},\bar{s}) = \frac{ 4\kappa_0 (9 \kappa_0^3 - 2 \lambda)}{3}  \bs^4 + O(\bs^6).
\]
This shows that the mixed partial derivative $(\partial_s \partial_t A)(-\bar{s},\bar{s})$ is negative if $\bs > 0$ is sufficiently small, implying that $C$ does not satisfy the Quadrangle Property.

\section*{Acknowledgements}

The research published in this paper has been partially supported by the National Research, Development and Innovation Office, NKFI, K-147544 grant.

The authors express their gratitude to an unknown referee for many helpful suggestions, and for directing their attention to the connection between Hausdorff distance and bounded Lipschitz metric.

\end{document}